\documentclass[11pt]{amsart}	

\usepackage{fullpage}

\usepackage{amssymb, amsmath, amsthm, color, amsfonts, bm}
\usepackage[hidelinks]{hyperref}

\usepackage{paracol, calc, blindtext, xcolor}
\usepackage{mathabx}
\usepackage{mathtools}

\usepackage{mathrsfs}

\newtheorem{theorem}{Theorem}
\numberwithin{theorem}{section}
\numberwithin{equation}{section}

\newtheorem{definition}[theorem]{Definition}

\newtheorem{remark}[theorem]{Remark}

\newcommand{\Z}{\mathbb{Z}}

\renewcommand{\H}{\mathbb{H}}

\allowdisplaybreaks

\begin{document}

\author{Savana Ammons}
\address{Harvey Mudd College}
\email{sammons@g.hmc.edu}

\author{Young Jin Kim}
\address{Reed College}
\email{ykim2113@gmail.com}

\author{Laura Seaberg}
\address{Haverford College}
\email{lseaberg42@gmail.com}

\author{Holly Swisher}
\address{Department of Mathematics, Kidder Hall 368, Oregon State University, Corvallis, OR 97331-4605}
\email{swisherh@math.oregonstate.edu}

\title{An analogue of $k$-marked Durfee symbols for strongly unimodal sequences}

\subjclass[2010]{05A17, 11P81, 11P82, 11P83, 11P84, 11F37}
\keywords{partitions, $k$-marked Durfee symbols, strongly unimodal sequences, rank generating functions, quantum modular forms}

\thanks{This work was partially supported by the National Science Foundation REU Site Grant DMS-1757995, and Oregon State University.}

\begin{abstract} 
In a seminal 2007 paper, Andrews introduced a class of combinatorial objects that generalize partitions called $k$-marked Durfee symbols.  Multivariate rank generating functions for these objects have been shown by many to have interesting modularity properties at certain vectors of roots of unity.  Motivated by recent studies of rank generating functions for strongly unimodal sequences, we apply methods of Andrews to define an analogous class of combinatorial objects called $k$-marked strongly unimodal symbols that generalize strongly unimodal sequences.  We establish a multivariate rank generating function for these objects, which we study combinatorially.  We conclude by discussing potential quantum modularity properties for this rank generating function at certain vectors of roots of unity.
\end{abstract}

\maketitle

\section{Introduction and Statement of Results}

\subsection{Partitions and $k$-marked Durfee symbols}\label{partitions}\label{intro-p}
A \emph{partition} of a positive integer $n$ is any nonincreasing sequence of positive integers called \emph{parts} that sum to $n$; we further define the empty set to be the sole partition of $0$.  Partitions have been a rich source of study from many mathematical and physical perspectives, which is partly due to their powerful connection to the theory of modular forms.  One can see this connection immediately due to the following relationship between the generating function for the partition counting function $p(n)$ and Dedekind's eta function $\eta(\tau)$, 
\begin{equation}\label{eta-p}
\sum_{n\geq 0}p(n)q^n  = q^{\frac{1}{24}}\eta(\tau)^{-1}.
\end{equation}
Here $p(n)$ counts the number of partitions of $n$, and $\eta(\tau) = q^{\frac{1}{24}}\prod_{n=1}^\infty (1-q^n)$ is a weight $1/2$ modular form for $q=e^{2\pi i \tau}$, $\tau\in \H$.  However, the combinatorial rank function for partitions demonstrates perhaps an even more striking relationship between partitions and modularity.  Dyson \cite{Dyson} defined the {\em rank} of a partition to be its largest part minus its number of parts, and conjectured that the rank could be used to combinatorially explain Ramanujan's famous partition congruences modulo 5 and 7.  This was later proved by Atkin and Swinnerton-Dyer \cite{ASD}. 

The \emph{partition rank function} $N(m,n)$ counts the number of partitions of $n$ with rank equal to $m$.  The two variable generating function for $N(m,n)$ may be expressed as the following $q$-hypergeometric series 
\begin{align}\label{rankgenfn}  
\sum_{m\in \Z} \sum_{n\geq 0}N(m,n) z^m q^n =   \sum_{n=0}^\infty \frac{q^{n^2}}{(zq;q)_n(z^{-1}q;q)_n} =: R_1(z;q),
\end{align}   
where $N(m,0)=\delta_{m0}$, in terms of the Kronecker delta function $\delta_{ij}$, and the $q$-Pochhammer symbol $(a;q)_n$ is defined by 
\[
(a;q)_n:=\prod_{j=1}^n (1-aq^{j-1}),
\]
for $n$ either a nonnegative integer or infinity.

Specializing $R_1(z;q)$ at $z=1$ recovers the partition generating function 
\[
R_1(1;q) = \sum_{n\geq 0}p(n)q^n
\]
from which we can observe modularity via \eqref{eta-p}.  Letting $z=-1$ in $R_1(z;q)$ gives
\begin{equation}\label{r1mock2}
R_1(-1;q) =   \sum_{n=0}^\infty \frac{q^{n^2}}{(-q;q)_n^2},
\end{equation}
which is one of Ramanujan's original third order mock theta functions.  Bringmann and Ono \cite{BO} showed that letting $z=\omega$, for any nonidentity root of unity $\omega$, yields that $R_1(\omega;q)$ is a weight $1/2$ mock modular form.  

Partitions can be represented visually using \emph{Ferrers diagrams}, where each part is represented by a horizontal row of dots, which are left-justified and decreasing from top down. The largest square of dots within the Ferrers diagram of a partition is called the \emph{Durfee square} of the partition.  In a 2007 paper, Andrews \cite{Andrews} defined Durfee symbols, which provide an alternate representation of a partition.  For each partition, the Durfee symbol encodes the side length of the Durfee square and in addition the lengths of the columns to the right of as well as the rows beneath the Durfee square.  For example, below we show the $5$ partitions of $4$, followed by the Ferrers diagrams with highlighted Durfee squares, and then the associated Durfee symbols.  

\begin{figure}[h!]
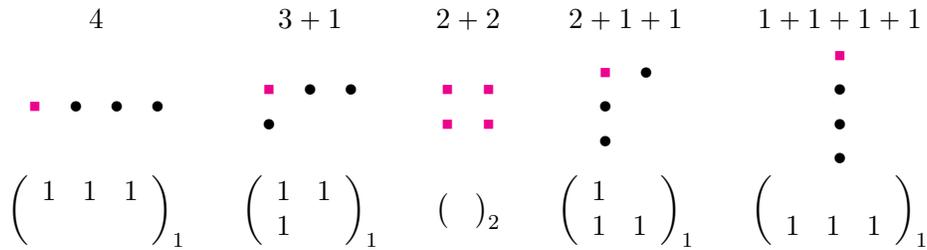
\caption{Partitions, Ferrers Diagrams, and Durfee Symbols for $n=5$}\label{FerrersDurfee} 
\[
\begin{array}{ccccc}
4 & 3+1 & 2+2 & 2+1+1 & 1+1+1+1 \\
\begin{array}{llll} \color{magenta}\sqbullet \color{black} & \bullet & \bullet & \bullet \end{array} & 
\begin{array}{lll} \color{magenta}\sqbullet \color{black}  & \bullet & \bullet \\ \bullet & &  \end{array}  &   
\begin{array}{ll}  \color{magenta}\sqbullet \color{black}  & \color{magenta}\sqbullet \color{black}  \\ \color{magenta}\sqbullet \color{black}  & \color{magenta}\sqbullet \color{black}  \end{array}  & 
\begin{array}{ll} \color{magenta}\sqbullet \color{black}  & \bullet \\ \bullet & \\ \bullet &  \end{array} & \begin{array}{l} \color{magenta}\sqbullet \color{black}  \\ \bullet \\ \bullet \\ \bullet  \end{array} \\
\hspace{2mm} \left( \begin{array}{lll} 1 & 1 & 1 \\  & &  \end{array} \right)_1 \hspace{2mm} & \hspace{2mm} \left( \begin{array}{ll} 1 & 1  \\  1 &  \end{array} \right)_1 \hspace{2mm} & \hspace{2mm} \left( \begin{array}{l}  \\    \end{array} \right)_2 \hspace{2mm} & \hspace{2mm} \left( \begin{array}{ll} 1 & \\  1 & 1 \end{array} \right)_1 \hspace{2mm} & \hspace{2mm} \left( \begin{array}{lll} & & \\ 1 & 1 & 1  \end{array} \right)_1 \hspace{2mm}\\
\end{array}
\]
\end{figure}

We can think of the parts on the top or bottom row as a partition $\alpha$ or $\beta$, respectively. Let $\ell(\alpha)$ denote the number of parts in a partition $\alpha$.  The \emph{rank} of a Durfee symbol is 
\begin{equation}\label{DSrank}
\ell(\alpha) - \ell(\beta),
\end{equation}
the length of the partition in the top row, minus the length of the partition in the bottom row,  which gives Dyson's rank of the corresponding partition.

Andrews \cite{Andrews} modifies Durfee symbols by defining objects called $k$-marked Durfee symbols using $k$ copies of the integers.  He further defines $k$ different rank statistics for the $k$-marked Durfee symbols. Moreover, letting $\mathcal{D}_k(m_1,m_2,\dots, m_k;n)$ denote the number of $k$-marked Durfee symbols of $n$ with $i$th rank equal to $m_i$, Andrews establishes a $k+1$-variable rank generating function $R_k(x_1, \ldots, x_k;q)$ which may be expressed in terms of $q$-hypergeometric series, analogous to (\ref{rankgenfn}).
Complete definitions of these objects are given in \S \ref{comb}. 

Bringmann \cite{Bringmann} showed that the function $R_2(1,1;q)$ is a quasimock modular form. Bringmann, Garvan, and Mahlburg \cite{BGM} expanded on this by showing that $R_k(1,...,1; q)$ is a quasimock modular form for all $k \geq 2$. In 2013, Folsom and Kimport \cite{FK} went on to prove that for more general vectors of roots of unity, $R_k(\omega_1, \ldots, \omega_k; q)$ with $k \geq 2$ is a type of mixed mock modular form. Then in 2018, Folsom, Jang, Kimport, and the fourth author \cite{FJKS, FJKS2} proved that $R_k(\omega_1, \ldots, \omega_k; q)$ with $k \geq 2$ has quantum modularity properties for these more general vectors of roots of unity.

\subsection{Strongly Unimodal Sequences}\label{intro-su}
There is a related combinatorial object which has also exhibited interesting connections to modular forms theory.  A strongly unimodal sequence of size $n$ is a list of positive integers $a_1, \ldots, a_s$ that sum to $n$ such that 
\[
a_1<a_2<\cdots < a_k > a_{k+1} > \cdots >a_s,
\]
where $a_k$ is called the \emph{peak}.   We write $u(n)$ to count the number of strongly unimodal sequences of size $n$.  

As with partitions, strongly unimodal sequences have been a fruitful source of study from a variety of perspectives.  Similar to Ferrers diagrams for partitions, strongly unimodal sequences can be visualized graphically by representing each part $a_i$ as a column of dots, ordering by index from left to right.   For each strongly unimodal sequence we can define a symbol analogous to the Durfee symbol of a partition that encodes the size of the peak of the strongly unimodal sequence, as well as the length of the columns to the right and left of the peak.  We call such symbols \emph{strongly unimodal symbols} in this paper.  For example, below we show the $4$ strongly unimodal sequences of size $4$ with highlighted peaks, followed by their diagrams and associated strongly unimodal symbols.

\begin{figure}[h!]
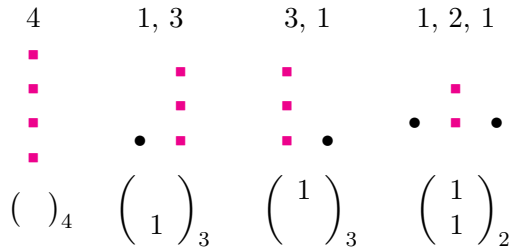
\caption{Strongly Unimodal Sequences, Diagrams, and Symbols for $n=4$}\label{UnimodalStick} 

\[
\begin{tabular}{cccc}
$4$ & $1$, $3$ & $3$, $1$ & $1$, $2$, $1$\\
$\begin{array}{c} {\color{magenta} \sqbullet} \\ {\color{magenta} \sqbullet} \\ {\color{magenta} \sqbullet} \\ {\color{magenta} \sqbullet} \\  \end{array}$ & 
$\begin{array}{cc}  & {\color{magenta} \sqbullet} \\ & {\color{magenta} \sqbullet} \\ \bullet & {\color{magenta} \sqbullet}  \end{array}$ & 
$\begin{array}{cc}   {\color{magenta} \sqbullet}& \\ {\color{magenta} \sqbullet} & \\ {\color{magenta} \sqbullet} & \bullet  \end{array}$ & 
$\begin{array}{ccc}  & {\color{magenta} \sqbullet} & \\ \bullet &  {\color{magenta} \sqbullet} & \bullet  \end{array}$ \\
\hspace{2mm}$\left( \begin{array}{l}  \\  \end{array} \right)_4$ & 
\hspace{2mm}$\left( \begin{array}{l} {}  \\  1   \end{array} \right)_3$ \hspace{2mm}& 
\hspace{2mm}$\left( \begin{array}{l}  1 \\ {}  \end{array} \right)_3$ \hspace{2mm}& 
\hspace{2mm}$\left( \begin{array}{l} 1 \\  1 \end{array} \right)_2$ \\
\end{tabular}
\]
\end{figure}

From a partition theoretic perspective (see \cite{BM} for instance), it is natural to define the \emph{rank} of a strongly unimodal sequence of size $n$ to be the number of terms to the right of the peak minus the number of terms to the left of the peak.  We write $u(m,n)$ to count the number of strongly unimodal sequences of size $n$ with rank $m$.  Similarly to \eqref{rankgenfn}, the generating function for $u(m,n)$ can be expressed as a $q$-hypergeometric series 
\[
\sum_{m\in \mathbb{Z}}\sum_{n\geq 0} u(m,n)z^mq^n = \sum_{n\geq 0} (-zq;q)_n(-zq^{-1};q)_nq^{n+1} =: U(z;q).
\]
Specializing $U(z;q)$ at $z=1$ recovers the strongly unimodal generating function 
\[
U(1;q) = \sum_{n\geq 0}u(n)q^n,
\]
which Andrews \cite{Andrews2} showed could be expressed in terms of two mock theta functions.  In a beautiful paper by Bryson, Ono, Pitman, and Rhoades \cite{BOPR}, they show that $U(-1;q)$ is a mock and quantum modular form which has a duality relationship with Kontsevich's ``strange" function, one of Zagier's original examples of a quantum modular form \cite{Zagier}.  Moreover, $U(\pm i;q)$ is a mock theta function.  There has also been related work connecting rank generating functions for strongly unimodal sequences with mock and quantum modular or Jacobi forms (see \cite{BFR, KLL, BF} for example).

\subsection{Combinatorial work of Andrews and our analogous results}

When Andrews \cite{Andrews} introduced $k$-marked Durfee symbols, he demonstrated a variety of interesting combinatorial properties that they satisfy, including a combinatorial explanation for congruences of the symmetrized $k$th moment functions for partition ranks.  Motivated by the above discussion of combinatorial rank generating functions in the context of modularity, our project was to construct objects analogous to $k$-marked Durfee symbols in the setting of strongly unimodal sequences and study their combinatorial properties with an eye toward connections to modularity.

Andrews \cite{Andrews} established the following combinatorial rank generating function for $k$-marked Durfee symbols. 

\begin{theorem}[Andrews {\cite[Thm. 10]{Andrews}}] \label{Andrews1}
Let $\mathcal{D}_k(m_1,m_2,\dots, m_k;n)$ count the number of $k$-marked Durfee symbols of $n$ with $i$th rank equal to $m_i$.  Then for $k\geq 1$,
\[
\sum_{m_i \in \Z}\sum_{n>0} \mathcal{D}_k(m_1, m_2, ..., m_k; n) x_1^{m_1}x_2^{m_2}...x_k^{m_k}q^n = R_k(x_1, x_2,..., x_k; q),
\]
where $R_1(x;q)$ is defined in \eqref{rankgenfn} and for $k\geq 2$, 
\begin{multline*}
R_k(x_1, \ldots, x_k;q) := \\ \mathop{\sum_{m_1 > 0}}_{m_2,\dots,m_k \geq 0} 
\frac{q^{M_k^2 + (M_1 + \dots + M_{k-1})}}
{(x_1q;q)_{m_1} \!\! \left(\frac{q}{x_1};q\right)_{m_1} \!\!\! (x_2 q^{M_1};q)_{m_2 + 1} \left(\frac{q^{M_1}}{x_2};q\right)_{m_2+1} \!\!\! \!\!\!  \cdots(x_k q^{M_{k-1}};q)_{m_k+1} \!\!  \left(\frac{q^{M_{k-1}}}{x_k};q\right)_{ m_k+1}},
\end{multline*}
where $M_j:= m_1 + m_2 + \cdots + m_j$ for each $1\leq j \leq k$.
\end{theorem}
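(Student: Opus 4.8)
The plan is to establish the identity by directly expanding the multivariate generating function from the combinatorial definition of $k$-marked Durfee symbols in \S\ref{comb}, enumerating every symbol according to the geometry of its Durfee square and its $k$ nested color bands. To avoid colliding with the rank variables (named $m_i$ on the left-hand side), I reserve $m_1,\dots,m_k$ for the band widths on the right-hand side and write $M_j=m_1+\cdots+m_j$ as in the statement. First I would stratify the sum over symbols by the side length $D$ of the Durfee square and by the breakpoints $0=M_0\le M_1\le\cdots\le M_k=D$ that separate consecutive colors, equivalently by the widths $m_1\ge 1$ and $m_2,\dots,m_k\ge 0$. Every symbol in a fixed stratum contributes the common factor $q^{D^2}=q^{M_k^2}$ from its Durfee square, and within the stratum the colored pieces $\alpha^{(i)},\beta^{(i)}$ range independently over all partitions whose parts lie in the band prescribed for color $i$: namely $\{1,\dots,M_1\}$ for $i=1$, and $\{M_{i-1},\dots,M_i\}$ for $2\le i\le k$.

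Next I would convert each band into a reciprocal $q$-Pochhammer via the elementary identity
\[
\prod_{j\in S}\frac{1}{1-xq^{j}}=\sum_{\lambda:\,\mathrm{parts}\in S}x^{\ell(\lambda)}q^{|\lambda|},
\]
which records the number of parts of $\lambda$ in the exponent of $x$ and the size of $\lambda$ in the exponent of $q$. Applying this to the top partition $\alpha^{(i)}$ with weight $x_i$ and to the bottom partition $\beta^{(i)}$ with weight $x_i^{-1}$ produces exactly $\tfrac{1}{(x_1q;q)_{m_1}(q/x_1;q)_{m_1}}$ for color $1$ (which has $m_1$ admissible part sizes) and $\tfrac{1}{(x_iq^{M_{i-1}};q)_{m_i+1}(q^{M_{i-1}}/x_i;q)_{m_i+1}}$ for $2\le i\le k$ (which has $m_i+1$ admissible part sizes). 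Since the net power of $x_i$ is $\ell(\alpha^{(i)})-\ell(\beta^{(i)})$, this variable tracks precisely the $i$th rank, matching the left-hand side. The remaining prefactor $q^{M_1+\cdots+M_{k-1}}$ then comes from the forced linking parts---one of size $M_i$ for each interior breakpoint $1\le i\le k-1$---that the definition attaches where consecutive bands meet; these are rank-neutral by construction, so they contribute to the $q$-weight but carry no power of any $x_i$. Summing the resulting product over all admissible $(m_1,\dots,m_k)$ and recombining the strata yields $R_k(x_1,\dots,x_k;q)$. The case $k=1$ is a useful sanity check: there are no breakpoints and no linking parts, and a single free band with parts $\le D$ recovers \eqref{rankgenfn}.

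The main obstacle is the boundary bookkeeping. Consecutive bands $\{M_{i-1},\dots,M_i\}$ and $\{M_i,\dots,M_{i+1}\}$ share the value $M_i$, so I must verify that the conditions in \S\ref{comb} assign each symbol to exactly one stratum and one coloring---that is, that the correspondence between colored symbols and the monomials generated by the product is a genuine bijection with no double counting---and that the linking parts are isolated cleanly enough to produce exactly $q^{M_1+\cdots+M_{k-1}}$ and no stray factor of $x_i$. If handling all $k-1$ boundaries simultaneously proves unwieldy, I would instead argue by induction on $k$, peeling off the outermost color $k$ to relate $R_k$ to $R_{k-1}$ through a single new band and a single new breakpoint, which localizes the boundary analysis to one step.
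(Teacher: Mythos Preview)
The paper does not supply its own proof of this statement; Theorem~\ref{Andrews1} is quoted from Andrews~\cite{Andrews} and used as background. The closest thing to a proof in the paper is the argument for the analogous Theorem~\ref{UnDef}, which the authors explicitly describe as following Andrews' method for Theorem~\ref{Andrews1}. Your proposal is that same method: stratify by the Durfee side $M_k$ and the breakpoints $M_1,\dots,M_{k-1}$ (equivalently by the band widths $m_j=M_j-M_{j-1}$), generate each colored pair $(\alpha^{(i)},\beta^{(i)})$ independently over its admissible interval by the reciprocal $q$-Pochhammer identity, and isolate for each $1\le i\le k-1$ the forced largest part $M_i$ in the top row as a bare $q^{M_i}$ factor---rank-neutral because the $-1$ in Definition~\ref{AndrewsRankDef} exactly cancels its contribution to $\ell(\alpha^{(i)})$. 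Your boundary worry is harmless: parts of size $M_i$ are legitimately allowed in either of the two adjacent bands, and since the subscript is part of the data of a $k$-marked Durfee symbol these count as distinct objects, so the product of band generating functions enumerates each symbol exactly once. In short, your plan is correct and is precisely the approach the paper invokes (and mirrors in its proof of Theorem~\ref{UnDef}); the inductive fallback you mention is unnecessary.
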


In \S \ref{comb}, we review the full definition of $k$-marked Durfee symbols, and provide our construction of analogous $k$-marked strongly unimodal symbols.  
Our first result is to establish a rank generating function for $k$-marked strongly unimodal symbols analogous to Theorem \ref{Andrews1}.

\begin{theorem}\label{UnDef}
Let $\mathcal{U}_k(m_1, m_2, ..., m_k; n)$ count the number of $k$-marked strongly unimodal symbols of size $n$ with $i$th rank equal to $m_i$.  Then for $k\geq 1$, 
\[
\sum_{m_i \in \Z}\sum_{n\geq 1} \mathcal{U}_k(m_1, m_2, ..., m_k; n) x_1^{m_1}x_2^{m_2}...x_k^{m_k}q^n = U_k(x_1, x_2,..., x_k; q),
\]
where 
\begin{multline*}
U_k(x_1, x_2,..., x_k; q) := \sum_{m_1,\dots, m_k \geq 1} q^{M_1 +  \cdots + M_{k}}
\cdot (1+x_1^{-1}q^{M_1})(1+x_2^{-1}q^{M_2})\cdots  (1+x_{k-1}^{-1}q^{M_{k-1}}) \\
\cdot (-x_1q;q)_{m_1-1}(-x_1^{-1}q;q)_{m_1-1}(-x_2q^{M_1+1};q)_{m_2-1}(-x_2^{-1}q^{M_1+1};q)_{m_2-1}\\
\cdots(-x_kq^{M_{k-1}+1};q)_{m_k-1}(-x^{-1}_k q^{M_{k-1}+1};q)_{m_k-1},
\end{multline*}
and $M_j:= m_1 + m_2 + \cdots + m_j$ for each $1\leq j \leq k$.
\end{theorem}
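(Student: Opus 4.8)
The plan is to compute the left-hand generating function directly from the combinatorial definition of $k$-marked strongly unimodal symbols given in \S\ref{comb}, following the strategy by which Andrews obtains Theorem \ref{Andrews1}. The organizing principle is to sum over the data recording how the column-heights of a symbol are distributed among the $k$ colors. Concretely, I would introduce parameters $m_1,\dots,m_k\ge 1$, where $m_j$ measures the size of the $j$th color-band, set $M_j:=m_1+\cdots+m_j$ (with $M_0:=0$), and show that the interlacing conditions in the definition force the color-$j$ columns to occupy the heights $M_{j-1}+1,\dots,M_j$, while the peak sits above all of them. For a fixed profile $(m_1,\dots,m_k)$ the generating function should then factor as a product over the $k$ bands, because columns of distinct heights contribute independently and multiplicatively to both the size $n$ and the rank vector on the left-hand side.

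As a warm-up and consistency check I would first dispose of the base case $k=1$. Here a symbol is an ordinary strongly unimodal sequence whose peak has value $M_1=m_1$; its left columns and its right columns range independently over the distinct heights $1,\dots,m_1-1$, and a left (resp.\ right) column of height $h$ carries weight $x_1^{-1}q^{h}$ (resp.\ $x_1q^{h}$), since the rank is the number of right columns minus the number of left columns. Summing over all choices of which heights occur on each side produces the paired factor $(-x_1q;q)_{m_1-1}(-x_1^{-1}q;q)_{m_1-1}$, and summing over $m_1\ge 1$ recovers the rank generating function for ordinary strongly unimodal sequences discussed in \S\ref{intro-su}. This fixes the weights of the free columns, which are exactly the paired Pochhammer factors appearing in the statement.

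For general $k$ I would then assemble the three types of contributions from each band. The free columns of color $j$ occupy the $m_j-1$ heights $M_{j-1}+1,\dots,M_j-1$, each independently absent, present on the left, or present on the right; summing over all subsets gives $(-x_jq^{M_{j-1}+1};q)_{m_j-1}(-x_j^{-1}q^{M_{j-1}+1};q)_{m_j-1}$. The peak, whose value the construction arranges to equal $M_1+\cdots+M_k$, contributes the prefactor $q^{M_1+\cdots+M_k}$. Finally, at each interior boundary height $M_j$ for $1\le j\le k-1$ the conditions admit one additional optional column, which is constrained to be a left column of color $j$ and so contributes the asymmetric factor $(1+x_j^{-1}q^{M_j})$. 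Multiplying these factors and summing over all profiles $(m_1,\dots,m_k)$ with each $m_j\ge1$ gives $U_k$, and by construction the exponent of $x_i$ in each resulting monomial equals $\ell(\alpha_i)-\ell(\beta_i)$, the $i$th rank, so the two sides agree term by term.

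The main obstacle is the bookkeeping at the band boundaries. From the interlacing conditions I must extract two facts that are not forced by symmetry: first, that the peak value is exactly $M_1+\cdots+M_k$, so that the clean prefactor $q^{M_1+\cdots+M_k}$ appears rather than some symmetric combination of boundary columns; and second, that the single extra column allowed at each interior boundary $M_j$ is forced to lie on the left and to carry color $j$, which is precisely what breaks the left--right symmetry and yields the factors $(1+x_j^{-1}q^{M_j})$. Verifying that this description is an exact correspondence between symbols and the monomials generated by the product, with no over- or undercounting where consecutive bands meet, is the delicate point; the remaining work is the routine expansion of finite $q$-products. Should a direct count at the interfaces prove awkward, an alternative is to induct on $k$ by peeling off the color-$k$ layer: the inner sum over $m_k$, together with the new boundary factor at $M_{k-1}$, is a copy of the $k=1$ building block with its heights shifted by $M_{k-1}$, reducing $U_k$ to $U_{k-1}$.
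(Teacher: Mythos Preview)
Your overall strategy---decompose by color bands indexed by $(m_1,\dots,m_k)$ and multiply independent factors---is exactly the approach the paper takes, and your identification of the paired Pochhammer factors $(-x_jq^{M_{j-1}+1};q)_{m_j-1}(-x_j^{-1}q^{M_{j-1}+1};q)_{m_j-1}$ and the optional bottom-row factor $(1+x_j^{-1}q^{M_j})$ is correct. But your accounting at the band boundaries contains a genuine error, precisely at the point you yourself flag as the ``main obstacle.''

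The peak value is $M_k$, not $M_1+\cdots+M_k$. (You had this right in your $k=1$ warm-up.) The peak therefore contributes only $q^{M_k}$, and the remaining factor $q^{M_1+\cdots+M_{k-1}}$ does not come from the peak at all. It comes from the \emph{forced} top-row part of size $M_j$ in color $j$, for each $1\le j\le k-1$: condition (2) of Definition \ref{UnimRules} requires that subscript $j$ appear in the top row, and $M_j$ is by definition its largest part there. So at each interior boundary height $M_j$ there are two things happening, not one: a mandatory right (top-row) column of color $j$, contributing $q^{M_j}$, and an optional left (bottom-row) column of color $j$, contributing $(1+x_j^{-1}q^{M_j})$. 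You recorded the second but missed the first.

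This also resolves a second slip: you write that the exponent of $x_i$ equals $\ell(\alpha^i)-\ell(\beta^i)$, but by Definition \ref{RankDef} the $i$th rank is $\ell(\alpha^i)-\ell(\beta^i)-1$ for $i<k$. The forced top-row column at height $M_i$ adds $1$ to $\ell(\alpha^i)$, and the $-1$ in the rank definition exactly cancels it; that is why this column contributes $q^{M_i}$ with no $x_i$. Once you insert the forced boundary columns and use the correct rank, the product matches $U_k$ term by term, as in the paper. (A minor wording issue: each free height can be present on the left \emph{and} on the right independently---four states, not three---which is what the product of the two Pochhammer factors encodes.)
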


We note here that this is not the first generalization of $k$-marked Durfee symbols.  In work of Bringmann, Lovejoy, and Osburn \cite{BLO}, they construct a generalization related to overpartition pairs.  Further, Alfes, Bringmann, and Lovejoy \cite{ABL} have considered this same generalization in the odd setting.  In both cases, automorphic properties of the generating functions were demonstrated.  

Some of the combinatorics Andrews \cite{Andrews} pursued for $k$-marked Durfee symbols was from the perspective of self-conjugation.  The \emph{conjugate} of a partition can be obtained from its Ferrers diagram by simply constructing parts from the columns rather than the rows.  When the resulting partition is unchanged we call it \emph{self-conjugate}.  From this we can observe that self-conjugate partitions must have rank $0$.  As Andrews describes in \cite{Andrews}, Sylvester and Durfee \cite{Sylvester} were the first to study self-conjugate partitions, and it was this which originally led to the idea of Durfee squares as well as the proof that self-conjugate partitions are in bijection with partitions into distinct odd parts.  Andrews \cite{Andrews} defines a \emph{self-conjugate $k$-marked Durfee symbol} to be one in which the top and bottom rows are identical, and proves the following combinatorial result.

\begin{theorem}[Andrews {\cite[Thm. 14]{Andrews}}] \label{Andrews2}
The number of self-conjugate $k$-marked Durfee symbols of $n$ is equal to the number of partitions of $n$ into distinct unmarked odd parts, as well as $k-1$ differently marked $(k-1)$-marked even parts each at most twice the number of odd parts.
\end{theorem}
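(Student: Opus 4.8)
The plan is to reduce Theorem \ref{Andrews2} to a single generating function identity and to prove that identity, taking the classical case $k=1$ (Sylvester's correspondence between self-conjugate partitions and partitions into distinct odd parts) as both the base case and the conceptual model. First I would compute the generating function for the right-hand side. A partition into exactly $r$ distinct odd parts has generating function $q^{r^2}/(q^2;q^2)_r$, since the minimal such partition $1+3+\cdots+(2r-1)$ contributes $q^{r^2}$ and the remaining freedom is a partition into at most $r$ parts, each counted with weight $q^2$. For each of the $k-1$ marks we independently adjoin a partition into even parts each at most $2r$, contributing $1/(q^2;q^2)_r$ apiece. Hence the right-hand side has generating function
\[
\sum_{r\ge 0}\frac{q^{r^2}}{(q^2;q^2)_r}\cdot\frac{1}{(q^2;q^2)_r^{\,k-1}}=\sum_{r\ge 0}\frac{q^{r^2}}{(q^2;q^2)_r^{\,k}}.
\]

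Next I would compute the generating function for self-conjugate $k$-marked Durfee symbols directly from the definition reviewed in \S\ref{comb}. The key observation is that the self-conjugacy condition (top row identical to bottom row) means the symbol is entirely determined by its diagonal together with its top row: every part off the diagonal is paired with an equal part in the conjugate position, so it is counted with weight $q^2$ rather than $q$. Starting from Andrews' expression in Theorem \ref{Andrews1}, this amounts to collapsing each matched pair of factors $(x_iq^{M_{i-1}};q)_\bullet(q^{M_{i-1}}/x_i;q)_\bullet$ in the denominator into a single factor in the base $q^2$, while the diagonal contributes the term $q^{n^2}$ with $n$ the side length of the Durfee square. I expect this to yield exactly $\sum_{n\ge 0} q^{n^2}/(q^2;q^2)_n^{\,k}$, matching the right-hand side termwise, with $n$ (the Durfee side) corresponding to $r$ (the number of distinct odd parts)---precisely the correspondence in Sylvester's principal-hook bijection, under which the $n$ principal hooks of the self-conjugate symbol become $n$ distinct odd parts.

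Rather than route through the multivariate $R_k$, I could instead argue bijectively: Sylvester's map sends the diagonal hooks of a self-conjugate symbol to distinct odd parts, and the $k-1$ families of colored body parts (which, off the diagonal, come in equal top--bottom pairs bounded by the Durfee side) map to the $k-1$ marked families of even parts bounded by $2n$. This realizes the identity combinatorially and may be cleaner than the $q$-series manipulation, especially since the theorem as stated is phrased as an equality of counts.

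The main obstacle will be the derivation of the self-conjugate generating function from the $k$-marked structure: unlike ordinary partitions, conjugation of a $k$-marked Durfee symbol must interact correctly with the $k$ colors and with the nested bounds (parts of color $i$ governed by $M_{i-1}$) built into Andrews' construction. Verifying that self-conjugacy forces each colored family to be an arbitrary partition into parts at most the Durfee side $n$---so that all $k$ colors contribute the same factor $1/(q^2;q^2)_n$ and the bound matches the ``at most twice the number of odd parts'' condition on the right---is the delicate step, and the rest is routine.
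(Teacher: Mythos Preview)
First, note that the paper does not itself prove Theorem~\ref{Andrews2}; it is cited from Andrews.  The paper's proof of the analogue, Theorem~\ref{selfconjcounting}, follows Andrews' method, which is indeed to compute the self-conjugate generating function directly from the definition and then interpret it combinatorially.  So your overall strategy is the right one.

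However, your explicit computations are wrong on both sides, and the errors do not cancel.  You read ``$k-1$ differently marked $(k-1)$-marked even parts'' as $k-1$ independent families of arbitrary even parts at most $2r$, contributing $1/(q^2;q^2)_r^{k-1}$.  Comparison with Definition~\ref{omegaepsilondef} (and with Andrews' original) shows the intended meaning is $k-1$ even \emph{values} $2M_1\le\cdots\le 2M_{k-1}\le 2r$, one for each mark, each of which may repeat; the contribution is $\sum_{1\le M_1\le\cdots\le M_{k-1}\le r}\prod_{j}q^{2M_j}/(1-q^{2M_j})$, not $1/(q^2;q^2)_r^{k-1}$.  Correspondingly, the left-hand side is not $\sum_n q^{n^2}/(q^2;q^2)_n^{k}$ either.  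For $k=2$, working directly from Definition~\ref{kMarkedDurfeeDef} (with $\alpha^1$ a partition into parts in $[1,M_1]$ containing $M_1$ and $\alpha^2$ a partition into parts in $[M_1,n]$, each doubled) one finds
\[
\sum_{n\ge1}\frac{q^{n^2}}{(q^2;q^2)_n}\sum_{M_1=1}^{n}\frac{q^{2M_1}}{1-q^{2M_1}},
\]
and already at $n=2$ the inner sum differs from $1/(q^2;q^2)_2$ at the $q^6$ coefficient.  The point you flagged as ``delicate'' is exactly where the plan breaks: the overlapping supports $[M_{j-1},M_j]$ prevent the $k$ colored families from decoupling into $k$ independent partitions; what survives after telescoping is precisely the $\prod_j q^{2M_j}/(1-q^{2M_j})$ structure.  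With this correction made on both sides, the generating functions match termwise (Durfee side $n$ corresponding to the number $r$ of odd parts), and the rest of your plan goes through.
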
  

We similarly define a \emph{self-conjugate $k$-marked strongly unimodal symbol} to be one in which the top and bottom rows are identical.  Let $\mathcal{SCU}_k(n)$ denote the number of self-conjugate $k$-marked strongly unimodal symbols of $n$, and write the generating function as
\[
SCU_k(q):= \sum_{n\geq 0} \mathcal{SCU}_k(n)q^n.
\]
When $k=1$ (see \eqref{SCUgen_fn2} and preceeding discussion), this generating function has the form
\[
SCU_1(q) = \sum_{n\geq 1}q^n(-q^2;q^2)_{n-1}. 
\]

\begin{remark}\label{mocktheta}
The set of self-conjugate strongly unimodal sequences is in bijection with the set of partitions into odd parts where each odd part of size up to the largest part must occur at least once.  This can be realized by reading across the rows of the diagram for a self-conjugate strongly unimodal sequence to construct an appropriate partition into odd parts.  This explains combinatorially the following interpretation of Ramanujan's $3$rd order mock theta function $\psi(q)$ as stated in Bryson et al \cite{BOPR},
\[
\psi(q) = \sum_{n\geq 1}\frac{q^{n^2}}{(q;q^2)_n} = \sum_{n\geq 1}q^n(-q^2;q^2)_{n-1} = SCU_1(q),
\] 
and shows that $\mathcal{SCU}_1(n)$ can be interpreted combinatorially as the number of partitions of $n$ into odd parts where each odd part of size up to the largest part must occur at least once.
\end{remark}

Our next result provides a more general combinatorial interpretation for $SCU_k(q)$ when $k\geq 2$ analogous to Theorem \ref{Andrews2}.  In order to state it, we make the following definition.

\begin{definition}\label{omegaepsilondef}
Let $\omega_k(n)$ count the number of partitions of $n$ into at least $k$ unmarked odd parts such that every odd part less than the largest part appears at least once, as well as $k-1$ differently marked and distinctly valued $(k-1)$-marked even parts (which may repeat) such that each even part is less than twice the number of odd parts and the total number of even parts is odd.
Similarly, let $\epsilon_k(n)$ count the same as above, except where the total number of even parts is even.
\end{definition}

\begin{theorem}\label{selfconjcounting}
For $k\geq 2$, 
\[
SCU_k(q)=\sum_{n\geq0}(-1)^k(\omega_k(n)-\epsilon_k(n))q^n,
\]
where $\omega_k(n)$ and $\epsilon_k(n)$ are defined in Definition \ref{omegaepsilondef}.
\end{theorem}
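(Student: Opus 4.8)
The plan is to start from the explicit $q$-series for $SCU_k(q)$. I would first record that imposing the self-conjugacy condition---top row equal to bottom row---on the generating function of Theorem \ref{UnDef} forces each unpaired boundary factor $(1+x_i^{-1}q^{M_i})$ to act trivially and collapses every conjugate pair $(-x_iq^{M_{i-1}+1};q)_{m_i-1}(-x_i^{-1}q^{M_{i-1}+1};q)_{m_i-1}$ onto its diagonal $(-q^{2(M_{i-1}+1)};q^2)_{m_i-1}$, leaving
\[
SCU_k(q)=\sum_{m_1,\dots,m_k\ge1}q^{M_1+\cdots+M_k}\prod_{i=1}^{k}\bigl(-q^{2(M_{i-1}+1)};q^2\bigr)_{m_i-1}.
\]
Writing $N_i:=M_i$, so that $0<N_1<\cdots<N_k$, these even factors amalgamate into a single product running over all $j\in\{1,\dots,N_k-1\}$ except $j\in\{N_1,\dots,N_{k-1}\}$, giving the compact positive form
\[
SCU_k(q)=\sum_{0<N_1<\cdots<N_k}q^{\,N_1+\cdots+N_k}\!\!\prod_{\substack{1\le j\le N_k-1\\ j\notin\{N_1,\dots,N_{k-1}\}}}\!\!\bigl(1+q^{2j}\bigr).
\]

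Because this series has nonnegative coefficients, the content of Theorem \ref{selfconjcounting} is that the signed quantity $\omega_k-\epsilon_k$ agrees with it up to the uniform sign $(-1)^k$; the work is to match the two. For the odd-part side I would isolate the largest index $r:=N_k$ and use the identity behind Remark \ref{mocktheta}, $\sum_{r\ge1}q^r(-q^2;q^2)_{r-1}=\psi(q)=\sum_{r\ge1}q^{r^2}/(q;q^2)_r$, to convert the weight $q^r\prod_{j=1}^{r-1}(1+q^{2j})$ into partitions into odd parts in which every odd value up to the largest occurs at least once, with $r$ the number of odd parts. The constraint $r\ge k$ (forced by the $k-1$ indices lying below $r$) becomes ``at least $k$ odd parts,'' and the bound $2j<2r$ becomes ``each even part less than twice the number of odd parts,'' matching Definition \ref{omegaepsilondef}.

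For the even-part side I would factor $\prod_{j\notin\{N_i\}}(1+q^{2j})=\prod_{j=1}^{r-1}(1+q^{2j})\cdot\prod_{i=1}^{k-1}(1+q^{2N_i})^{-1}$ and expand each reciprocal as the geometric series $\sum_{\ell\ge0}(-1)^\ell q^{2N_i\ell}$, which is precisely the generating function of a repeated even part of the single distinct value $2N_i$ weighted by the parity of its multiplicity. The $k-1$ distinct values $N_1<\cdots<N_{k-1}$ then realize the ``$k-1$ differently marked and distinctly valued even parts (which may repeat),'' and the aggregate sign $(-1)^{\ell_1+\cdots+\ell_{k-1}}=(-1)^{\#\text{even parts}}$ is exactly the statistic splitting $\epsilon_k$ (even) from $\omega_k$ (odd). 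I expect the main obstacle to be the bookkeeping that correctly apportions the two roles played by each chosen index $N_i$: its peak weight $q^{N_i}$ must be absorbed into the odd-part dictionary of the previous step, while its reciprocal $(1+q^{2N_i})^{-1}$ must be absorbed into the signed even-part count, and these interact because the value $2N_i$ is simultaneously one of the positions the distinct-even product is allowed to omit. Verifying that this interaction produces exactly the difference $\omega_k-\epsilon_k$ and exactly the global sign $(-1)^k$---rather than an off-by-one in the sign or an extra shift---will require a careful (likely inductive, or involution-based) argument, and this is where the proof departs from, and genuinely refines, Andrews' unsigned derivation of Theorem \ref{Andrews2}.
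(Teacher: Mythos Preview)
Your overall architecture is the paper's: derive a closed form for $SCU_k(q)$, telescope the Pochhammer blocks into $(-q^2;q^2)_{N_k-1}$ divided by the missing factors $(1+q^{2N_i})$, interpret $q^{N_k}(-q^2;q^2)_{N_k-1}$ via Remark~\ref{mocktheta} as the odd-part side, and expand each $\frac{1}{1+q^{2N_i}}$ geometrically for the signed even-part side. So the route is not different; but there is a concrete error at the first step that creates the very ``bookkeeping obstacle'' you flag later.

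The error is in the claim that the boundary factors $(1+x_i^{-1}q^{M_i})$ ``act trivially'' under self-conjugacy. They do not. In $U_k$ that factor records whether $M_i$ appears in the bottom row $\beta^i$; for a self-conjugate symbol $M_i\in\alpha^i$ forces $M_i\in\beta^i$, so one must keep the $q^{M_i}$ term, not the $1$. Hence the correct exponent is $2(M_1+\cdots+M_{k-1})+M_k$, not $M_1+\cdots+M_k$, and your ``compact positive form'' should read
\[
SCU_k(q)=\sum_{0<N_1<\cdots<N_k} q^{\,2(N_1+\cdots+N_{k-1})+N_k}\prod_{\substack{1\le j\le N_k-1\\ j\notin\{N_1,\dots,N_{k-1}\}}}(1+q^{2j}).
\]
With this fix the stray weight $q^{N_i}$ you were trying to push into the odd-part dictionary disappears: the full weight $q^{2N_i}$ belongs to the even side, combining with the reciprocal to give $\dfrac{q^{2N_i}}{1+q^{2N_i}}=\sum_{\ell\ge 1}(-1)^{\ell-1}q^{2N_i\ell}$. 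This forces each marked even value to occur with multiplicity $\ell_i\ge 1$ (matching Definition~\ref{omegaepsilondef} exactly, whereas your $\sum_{\ell\ge 0}$ would not), and the global sign is then immediate: the product over $i=1,\dots,k-1$ carries $(-1)^{\sum_i(\ell_i-1)}=(-1)^{L-(k-1)}=(-1)^k(-1)^{L+1}$ with $L$ the total number of even parts, which is $+(-1)^k$ on $\omega_k$ and $-(-1)^k$ on $\epsilon_k$. No induction or involution is needed; the difficulty you anticipated was an artifact of the missing $q^{M_i}$'s.
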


The rest of the paper is organized as follows.  In \S \ref{comb}, we review the combinatorial construction of $k$-marked Durfee symbols and define $k$-marked strongly unimodal symbols.  In \S \ref{proofs}, we prove Theorems \ref{UnDef} and \ref{selfconjcounting}.  In \S \ref{conclusion} we conclude with some lingering open questions.

\section{Combinatorial Constructions}\label{comb}

Recall from \S \ref{partitions} that a Durfee symbol for a partition of $n$ encodes the side length of the Durfee square and in addition the lengths of the columns to the right of as well as the rows beneath the Durfee square, as is demonstrated in Figure \ref{FerrersDurfee}.  We now give the full definition of $k$-marked Durfee symbol for a positive integer $k$.

\begin{definition}[Andrews \cite{Andrews}] \label{kMarkedDurfeeDef}
A \emph{$k$-marked Durfee symbol} is a Durfee
symbol using $k$ copies of positive integers, denoted $\{1_1, 2_1, \ldots \}, \ldots, \{1_k, 2_k, \ldots \}$, for the parts in both rows. Additionally, when $k\geq 2$ the following are required.
\begin{enumerate}
\item In each row the sequence of parts and the sequence of subscripts are nonincreasing.
\item In the top row, each of $1, 2, \dots, k-1$ appears as the subscript of some part. 
\item If $M_j$ is the largest part with subscript $1\leq j \leq k-1$ in the top row, then all parts in the bottom row with subscript $1$ lie in the interval $[1, M_1]$, with subscript $2$ lie in $[M_1, M_2],$ ... with subscript $k-1$ lie in $[M_{k-2}, M_{k-1}]$, and with subscript $k$ lie in $[M_{k-1}, M_k]$, where $M_k$ is the side length of the Durfee square of the corresponding partition.
\end{enumerate}
\end{definition}

When we write a $k$-marked Durfee symbol it is convenient to separate the parts with distinct subscripts with vertical lines.  We can think of the collective parts with given subscript $j$ on the top or bottom row as a partition $\alpha^j$ or $\beta^j$, respectively.  In addition, one way to visualize $k$-marked Durfee symbols is through a Ferrers diagram in which parts corresponding to different subscripts have different colors.  We demonstrate this below with an example of a $3$-marked Durfee symbol of $55$.

\begin{figure}[h!]\caption{A $3$-marked Durfee symbol of $55$}\label{3-MDurfee} 
\[
\left(
\begin{array}{cc|ccc|c}
4_3 & 4_3 & 3_2 & 3_2 & 2_2 & 2_1 \\
       & 5_3 &        & 3_2 & 2_2 & 2_1
\end{array}
\right)_5
=:
\left(
\begin{array}{c|c|c}
\alpha^3 & \alpha^2 & \alpha^1 \\
\beta^3 & \beta^2 & \beta^1
\end{array}
\right)_5
\]
\medskip
\[
\begin{array}{ccccccccccc}
{\color{magenta} \sqbullet}  & {\color{magenta} \sqbullet} & {\color{magenta} \sqbullet} & {\color{magenta} \sqbullet} & {\color{magenta} \sqbullet} & {\color{violet} \bullet} & {\color{violet} \bullet}& {\color{cyan} \bullet} & {\color{cyan} \bullet} & {\color{cyan} \bullet}   & {\color{teal} \bullet}  \\
{\color{magenta} \sqbullet} & {\color{magenta} \sqbullet} & {\color{magenta} \sqbullet} & {\color{magenta} \sqbullet} & {\color{magenta} \sqbullet} & {\color{violet} \bullet} & {\color{violet} \bullet}& {\color{cyan} \bullet} & {\color{cyan} \bullet} & {\color{cyan} \bullet}   & {\color{teal} \bullet}  \\
{\color{magenta} \sqbullet} & {\color{magenta} \sqbullet} & {\color{magenta} \sqbullet} & {\color{magenta} \sqbullet} & {\color{magenta} \sqbullet} & {\color{violet} \bullet} & {\color{violet} \bullet}& {\color{cyan} \bullet} & {\color{cyan} \bullet} & &  \\
{\color{magenta} \sqbullet} & {\color{magenta} \sqbullet} & {\color{magenta} \sqbullet} & {\color{magenta} \sqbullet} & {\color{magenta} \sqbullet} & {\color{violet} \bullet} & {\color{violet} \bullet}&&& \\
{\color{magenta} \sqbullet} & {\color{magenta} \sqbullet} & {\color{magenta} \sqbullet} & {\color{magenta} \sqbullet} & {\color{magenta} \sqbullet} & &&&& \\
{\color{violet} \bullet} & {\color{violet} \bullet}& {\color{violet} \bullet} & {\color{violet} \bullet} & {\color{violet} \bullet}& & & & & & \\
{\color{cyan} \bullet} & {\color{cyan} \bullet} & {\color{cyan} \bullet} & &&&&&&&\\
{\color{cyan} \bullet} & {\color{cyan} \bullet}& & &&&&&&& \\
{\color{teal} \bullet} & {\color{teal} \bullet} & & &&&&&&& \\
\end{array}
\]
\end{figure}

Using $k$ copies of the integers naturally allows for the definition of $k$ rank statistics on $k$-marked Durfee symbols, generalizing the Durfee symbol rank given in \eqref{DSrank}.

\begin{definition}[Andrews \cite{Andrews}] \label{AndrewsRankDef}
Let $\gamma$ be a $k$-marked Durfee symbol and let $\alpha^j$, $\beta^j$ denote the partitions corresponding to subscript $j$ in the top and bottom rows, respectively.  The \textit{$j$th rank} of $\gamma$, denoted $\rho_j(\gamma)$, is defined by
\[
\rho_j(\gamma)=
\begin{cases}
\ell(\alpha^j) - \ell(\beta^j) -1  \; &  j <k \\
\ell(\alpha^n) - \ell(\beta^n) \; &  j=k.
\end{cases}
\]
\end{definition}

We note here that the extra $1$ is subtracted when $j\neq k$ because in Definition \ref{kMarkedDurfeeDef} it is required that each subscript $1, 2, \dots, k-1$ must appear in the top row.  Moreover, observe that when $k=1$ this recovers Dyson's rank of a partition.  In our example from Figure \ref{3-MDurfee}, we see the $3$rd rank is $1$, the $2$nd rank is $0$, and the $1$st rank is $-1$.  As in \S \ref{partitions}, we let $\mathcal{D}_k(m_1,m_2,\dots, m_k;n)$ denote the number of $k$-marked Durfee symbols of $n$ with $i$th rank equal to $m_i$.

We make the following definition for $k$-marked strongly unimodal symbols, analogous to Definition \ref{kMarkedDurfeeDef}.

\begin{definition}\label{UnimRules}
A $k$-marked strongly unimodal symbol is a strongly unimodal symbol using $k$ copies of positive integers (denoted with a subscript) for parts in both rows.  Additionally, when $k\geq 2$ the following are required.
\begin{enumerate}
\item \label{Rule1} In each row the parts are strictly decreasing and the subscripts are nonincreasing.
\item \label{Rule2} In the top row, each of $1, 2, \dots, k-1$ appears as the subscript of some part. 
\item \label{Rule3} If $M_j$ is the largest part with subscript $1\leq j \leq k-1$ in the top row and $M_0:=0$, then all parts in the bottom row with subscript $1 \leq j \leq k-1$ lie in the interval $[M_{j-1}+1, M_j]$, and those with subscript $k$ lie in $[M_{k-1}+1, M_k-1]$, where $M_k$ is the size of the peak of the corresponding strongly unimodal sequence.  
\end{enumerate}
\end{definition}

The $k$-marked strongly unimodal symbols can be represented analogously to that of $k$-marked Durfee symbols.  We demonstrate this below with an example of a $3$-marked strongly unimodal symbol of $18$.

\begin{figure}[h!]\caption{A $3$-marked strongly unimodal symbol of $18$}\label{3-MUnimodal} 
\[
\left(
\begin{array}{c|cc|c}
4_3 & 3_2 & 2_2 & 1_1 \\
3_3 &        & 2_2 & 1_1
\end{array}
\right)_5
=:
\left(
\begin{array}{c|c|c}
\alpha^3 & \alpha^2 & \alpha^1 \\
\beta^3 & \beta^2 & \beta^1
\end{array}
\right)_5
\]
\medskip
\[
\begin{array}{ccccccccccc}
&&& {\color{magenta} \sqbullet} &&&& \\
&&& {\color{magenta} \sqbullet} & {\color{violet} \bullet} &&& \\
&&{\color{violet} \bullet} & {\color{magenta} \sqbullet} & {\color{violet} \bullet} &{\color{cyan} \bullet}&& \\
&{\color{cyan} \bullet}&{\color{violet} \bullet} & {\color{magenta} \sqbullet} & {\color{violet} \bullet} &{\color{cyan} \bullet}&{\color{cyan} \bullet}& \\
 {\color{teal} \bullet} &{\color{cyan} \bullet}&{\color{violet} \bullet} & {\color{magenta} \sqbullet} & {\color{violet} \bullet} &{\color{cyan} \bullet}&{\color{cyan} \bullet}& {\color{teal} \bullet} \\
\end{array}
\]
\end{figure}

We can define $k$ ranks on $k$-marked strongly unimodal symbols exactly as in the Durfee case.  Since the definition is the same, we use the notation $\rho_j(\gamma)$ in both settings.

\begin{definition}\label{RankDef}
Let $\gamma$ be a $k$-marked strongly unimodal symbol and let $\alpha^j$, $\beta^j$ denote the partitions corresponding to subscript $j$ in the top and bottom rows, respectively.  The \textit{$j$th rank} of $\gamma$, denoted $\rho_j(\gamma)$, is defined by
\[
\rho_j(\gamma)=
\begin{cases}
\ell(\alpha^j) - \ell(\beta^j) -1  \; &  j <k \\
\ell(\alpha^n) - \ell(\beta^n) \; &  j=k.
\end{cases}
\]
\end{definition}

We can observe that when $k=1$, this recovers the rank of a strongly unimodal sequence.  In our example from Figure \ref{3-MUnimodal}, we see the $3$rd rank is $0$, the $2$nd rank is $0$, and the $1$st rank is $-1$.  We let $\mathcal{U}_k(m_1,m_2,\dots, m_k;n)$ denote the number of $k$-marked strongly unimodal symbols of $n$ with $j$th rank equal to $m_j$.

\section{Proofs of Results}\label{proofs}

In this section we prove our two main theorems, beginning with Theorem \ref{UnDef}.

\begin{proof}[Proof of Theorem \ref{UnDef}]
The proof is analogous to that of Andrews \cite[Theorem 10]{Andrews}. Consider an arbitrary $k$-marked strongly unimodal symbol $D$. As in Definition \ref{UnimRules}, let $M_k$ be the size of the peak of the corresponding strongly unimodal sequence, let $M_j$ be the largest part with subscript $1\leq j \leq k-1$ in the top row of $D$, and set $M_0:=0$. We define positive integers $m_1, \ldots, m_k$ associated to $D$ by setting for each $1\leq j \leq k$,
\begin{equation}\label{mdef}
m_j:=M_j-M_{j-1}.
\end{equation}

Let $\alpha^j$, $\beta^j$ denote the partitions corresponding to subscript $j$ in the top and bottom rows, respectively, as in Definition \ref{RankDef}. We next show how to generate the pairs $\alpha^j$, $\beta^j$ beginning with $j=1$.  

To generate $\alpha^1$ and $\beta^1$, we observe that by condition \ref{Rule3} of Definition \ref{UnimRules} the parts of $\alpha^1$ and $\beta^1$ must lie in $[1,M_1]$ and be distinct.  Also, the part $M_1$ must exist in $\alpha^1$.  Furthermore, we need to track the $1$st rank by counting the number of parts other than $M_1$ in $\alpha^1$, and subtracting the number of parts in $\beta^1$.  Thus since $M_1=m_1$, the parts in $\alpha^1$ and $\beta^1$ are generated by
\[
q^{M_1}(-x_1q;q)_{m_1-1}(-x_1^{-1}q;q)_{m_1} = q^{M_1}(1+x_1^{-1}q^{M_1})(-x_1q;q)_{m_1-1}(-x_1^{-1}q;q)_{m_1-1}.
\]

To generate $\alpha^2$ and $\beta^2$, we observe that by condition \ref{Rule3} of Definition \ref{UnimRules} the parts must lie in $[M_1+1, M_2]$ and be distinct.  Also, the part $M_2$ must exist in $\alpha^2$.   Furthermore, we need to track the $2$nd rank.  Thus, the parts in $\alpha^2$ and $\beta^2$ are generated by
\[
q^{M_2}(1+x_2^{-1}q^{M_2})(-x_2q^{M_1+1};q)_{m_2-1}(-x_2^{-1}q^{M_1+1};q)_{m_2-1}.
\]

For general $1\leq j \leq k-1$, we have that the parts are distinct and lie in $[M_{j-1}+1, M_{j}]$, and the part $M_{j}$ must exist in $\alpha^j$.  Thus to track the $j$th rank we see that the parts in $\alpha^j$ and $\beta^j$ are generated by
\[
q^{M_j}(1+x_j^{-1}q^{M_j})(-x_2q^{M_{j-1}+1};q)_{m_j-1}(-x_2^{-1}q^{M_{j-1}+1};q)_{m_j-1}.
\]

The parts in $\alpha^k$ and $\beta^k$ are distinct and lie in $[M_{k-1}+1, M_k-1]$.  Since $\alpha^k$ is allowed to be empty, the parts in $\alpha^k$ and $\beta^k$ are generated more simply by
\[
(-x_kq^{M_{k-1}+1};q)_{m_k-1}(-x^{-1}_k q^{M_{k-1}+1};q)_{m_k-1}.
\]
Finally, the peak is generated by $q^{M_k}$.

As all of these factors generate their respective parts of $D$, their product will generate the entirety of $D$. To account for all possible values for each $m_i$ and the size of $U$, we sum over all $m_i$'s with $m_i\geq 1$ for $1\leq i \leq k$ as well as over all $n \geq 1$. The result follows.
\end{proof}

We now prove Theorem \ref{selfconjcounting}.

\begin{proof}[Proof of Theorem \ref{selfconjcounting}]

This proof follows the method of Andrews \cite[Theorem 14]{Andrews}. We first derive the generating function for $\mathcal{SCU}_k(n)$ using a similar argument to that in the proof of Theorem \ref{UnDef}.   

Consider an arbitrary self-conjugate $k$-marked strongly unimodal symbol $D$, letting $\alpha^j$ denote the partition corresponding to subscript $j$ that occurs in both the top and bottom rows of $D$.  Recall the definition of $M_k$, for $0\leq j \leq k$ given in Definition \ref{UnimRules}, and define the positive integers $m_j$ for $1\leq j \leq k$ by $m_j:=M_j-M_{j-1}$.

The parts in $\alpha^1$ are distinct, lie in $[1,M_1]$, and the part $M_1$ must occur.  Thus the parts in $\alpha^1$ are generated by $q^{M_1}(-q;q)_{m_1-1}$.  To generate both copies of $\alpha^1$ (those from both the top and bottom row of $D$), we need two copies of each part; thus
\[
q^{2M_1}(-q^2;q^2)_{m_1-1}
\] 
generates both copies of $\alpha^1$.  For general $1\leq j \leq k-1$, the two copies of $\alpha^j$ are generated by
\[
q^{2M_j}(-q^{2(M_{j-1}+1)}; q^2)_{m_j-1},
\]
while the two copies of $\alpha^k$ which are not required to have a part are generated by
\[
(-q^{2(M_{k-1}+1)}; q^2)_{m_k-1}.
\]
Lastly, the peak is generated by $q^M_k$.  Together, this gives that 
\begin{multline*}\label{SCUgen_fn1}
SCU_k(q) = \\
\sum_{m_1,...,m_k\geq 1} q^{2(M_1 +  \cdots + M_{k-1}) + M_k}
(-q^2;q^2)_{m_1-1}(-q^{2(M_1+1)};q^2)_{m_2-1} \cdots (-q^{2(M_{k-1}+1)};q^2)_{m_k-1}.
\end{multline*}
We next observe that 
\[
(-q^2;q^2)_{m_1-1}(-q^{2(M_1+1)};q^2)_{m_2-1} \cdots (-q^{2(M_{k-1}+1)};q^2)_{m_k-1} = \frac{(q^2;q^2)_{M_k-1}}{(1+q^{2M_1}) \cdots (1+q^{2M_{k-1}})},
\]
which allows us to rewrite $SCU_k(q)$ as
\begin{multline}\label{SCUgen_fn2}
SCU_k(q) =
\sum_{m_1,...,m_k\geq 1} q^{2(M_1 +  \cdots + M_{k-1}) + M_k} \frac{(q^2;q^2)_{M_k-1}}{(1+q^{2M_1}) \cdots (1+q^{2M_{k-1}})} \\
= \sum_{M_k\geq k} q^{M_k}(-q^2;q^2)_{M_k-1}\sum_{1\leq M_1 < M_2 < \cdots < M_{k}} \dfrac{q^{2(M_1 + \cdots + M_{k-1})}}{(1+q^{2M_1})\cdots(1+q^{2M_{k-1}})}.
\end{multline}

We now interpret the right hand side of \eqref{SCUgen_fn2} combinatorially.  From the perspective of Remark \ref{mocktheta}, we see that for each choice of $M_k \geq k$, the term $q^{M_k}(-q^2;q^2)_{M_k-1}$ in \eqref{SCUgen_fn2} generates self-conjugate strongly unimodal sequences with peak of size $M_k$, or equivalently partitions into $M_k$ odd parts where each odd part at most the size of the largest part must occur at least once.  Then, in the inner sum, expand each factor as
\[
\frac{q^{2M_j}}{(1+q^{2M_j})} =  q^{2M_j}-q^{2(2M_j)}+q^{3(2M_j)}-q^{4(2M_j)} + \cdots,
\]
and interpret 
\[
\sum_{1\leq M_1 < M_2 < \cdots < M_{k}} \dfrac{q^{2(M_1 + \cdots + M_{k-1})}}{(1+q^{2M_1})\cdots(1+q^{2M_{k-1}})}
\]
as generating the difference in the number of partitions of some $n$ into an odd number of even parts $2M_1, \ldots, 2M_{k-1}$ minus the number of partitions of $n$ into an even number of even parts $2M_1, \ldots, 2M_{k-1}$, where each part $2M_j$ is $j$-marked, and $1\leq M_1 < M_2 < \cdots < M_{k}$.  Putting these interpretations together gives the result.
\end{proof}

\section{Concluding Remarks}\label{conclusion}

There are many potential directions to pursue in the study of $U_k(x_1, x_2,..., x_k; q)$.  For one, it is natural in the context of our discussions in \S \ref{intro-p} and \S \ref{intro-su}, to ask whether $U_k(x_1, x_2,..., x_k; q)$ possesses modularity properties of mock and/or quantum type when $(x_1, \ldots, x_k)$ is specialized at vectors of roots of unity.  As part of our REU project, we focused on potential quantum modularity properties.  We were able to determine a rational domain for $U_k(x_1, x_2,..., x_k; q)$ for certain vectors of roots of unity, and establish a trivial transformation property (see \cite[Section 4]{AKSS_Proc}).  However, a barrier for us to make more substantial headway is that we only have the multi-sum generating function for $U_k(x_1, x_2,..., x_k; q)$ given in Theorem \ref{UnDef} to work with.  This is in stark contrast to the situation for $k$-marked Durfee symbols, for which a $q$-hypergeometric transformation yields a beautiful single sum generating function for $R_k(x_1, x_2,..., x_k; q)$.  Despite our efforts, we were not able to produce a single sum generating function for $U_k(x_1, x_2,..., x_k; q)$.  It would be of great interest if a single sum generating function for $U_k(x_1, x_2,..., x_k; q)$ were discovered.

\end{document}